\documentclass[a4paper,10pt]{amsart}

\usepackage[utf8]{inputenc}
\usepackage[english]{babel}
\usepackage[T1]{fontenc}
\usepackage{amsfonts}
\usepackage{amsmath}
\usepackage{amsthm}
\usepackage{amssymb}
\usepackage{hyperref}
\usepackage{syntonly}
\usepackage{mathtools}
\usepackage{algorithm}
\usepackage{algpseudocode}
\usepackage{tikz}
\usepackage{dsfont}
\usepackage{braket}
\usepackage{faktor}
\usepackage{array}
\usepackage{lscape}
\usepackage{rotating}
\usepackage{epigraph}
\usepackage{subfig}
\usepackage{indentfirst}
\usepackage{booktabs}
\usepackage{multirow}
\usepackage{caption}
\usepackage{tabu}
\usepackage{tabularx}
\usepackage{xcolor}
\usepackage{pgf}
\usepackage{tikz}
\usepackage{caption}
\usepackage{color}
\usepackage{enumitem}
\usepackage{mathtools}
\usepackage{float}
\usepackage{graphicx}
\usepackage[displaymath, tightpage]{preview}
\usepackage[toc,page]{appendix}

\usetikzlibrary{matrix}
\usetikzlibrary{arrows}
\hypersetup{
    colorlinks,
    linkcolor={red!50!black},
    citecolor={blue!50!black},
    urlcolor={blue!80!black}
}

\usepackage[colorinlistoftodos, textwidth=4cm, shadow,disable]{todonotes}

\theoremstyle{plain}
\newtheorem{theorem}{Theorem}[section]

\newtheorem{proposition}{Proposition}[section]

\theoremstyle{definition}

\theoremstyle{remark}

\begin{document}

\title[Euler-Poincar\'{e} characteristic of parallel arrangements]{On the Euler-Poincar\'{e} characteristic of parallel toric arrangements}

\author[Elia Saini]{Elia Saini}
\address{(Elia Saini) Independent researcher, Via San Rocco 40/A, IT-23036, Teglio (SO).}
\email{saini.elia@outlook.com}
\subjclass[2010]{05E99, 15A21, 54F65}
\keywords{Toric arrangements, complement manifolds, Euler-Poincar\'{e} characteristic}

\begin{abstract}
Toric arrangements of maximal rank have been studied by the author in a paper that shows how the complement manifold of these arrangements is diffeomorphic to that of centered ones.
In this work we turn our attention to toric arrangements of rank one, namely parallel toric arrangements. Our aim is to prove, by means of basic arguments of cohomology theory, that the Euler-Poincar\'{e} characteristic of the complement manifold of parallel toric arrangements can be computed in terms of those of the complement manifolds of the singular subtori that compose
the arrangement.
\end{abstract}

\maketitle

 \section*{Introduction}

Toric arrangements represent a widely analyzed category of hypersurface configurations that play a significant role across multiple mathematical disciplines, including combinatorics, algebraic geometry, and topology.
Drawing upon the foundational investigations of Lefschetz \cite{L24} and Deligne \cite{D71}, these structures extend the concept of hyperplane arrangements to the framework of analyzing the complements of normal crossing divisors within smooth projective varieties.

A highly productive line of inquiry is dedicated to characterizing the topological and combinatorial invariants associated with these specific systems. The initial determination of the Betti numbers for the complement manifold of a toric configuration was accomplished by Looijenga \cite{L93} through the application of principles from sheaf theory. Subsequently, an explicit description of the complex cohomology ring was established in \cite{DP10}.
The intricate combinatorial framework of toric arrangements and their profound connections to arithmetic matroids have been investigated by several researchers, such as Br\"{a}nd\' {e}n, D'Adderio, Lenz, and Moci, across a sequence of papers including \cite{BM14}, \cite{DM13}, \cite{L17}, and \cite{M12}.

The fundamental problem of defining the homotopy type of the complement manifold for these configurations was initially addressed by Moci and Settepanella in \cite{MS11}. Later, d'Antonio and Delucchi \cite{dD12, dD15} utilized the toric complex introduced in \cite{MS11} to outline the fundamental group and establish a minimality theorem for toric arrangements that specialize to a real configuration. A direct outcome of this property is that the integer cohomology for this category of arrangements is torsion-free.

In the investigations carried out by Randell \cite{Ran02} alongside Dimca and Papadima \cite{DP03}, it is demonstrated that the complement manifold of any hyperplane arrangement is invariably minimal. A comparable property does not extend to generic hypersurfaces. Indeed, examining the complement of the plane cusp serves as a sufficient counterexample, as indicated in \cite{Ran02}. Consequently, the motivation to investigate the homotopy type of toric arrangement complements stems from the need to clarify which criteria govern the validity of these minimality properties within broader hypersurface configurations.

Lastly, by refining the analysis of the toric complex and the underlying poset of layers, Callegaro and Delucchi provided a formulation of the integer cohomology for the complement manifold of centered toric arrangements in \cite{CD17}. To differentiate which algebraic and topological invariants are dictated strictly by combinatorics, Pagaria introduced an example in \cite{Pag19} featuring two distinct toric arrangements that share an identical poset of layers yet exhibit different integer cohomologies. Meanwhile, De Concini and Gaiffi demonstrated in \cite{DCG21} that the rational cohomology ring of toric configurations is intrinsecally combinatorial.

Using basic techniques from linear algebra, the author has shown in \cite{Sai24} that for toric arrangements that have an associated matrix of maximal rank it is possible to perform a sequence of diffeomorphic changes of coordinates that transform these arrangemenets into centered ones, allowing the application of the results on  complexified arrangements developed by d'Antonio and Delucchi in \cite{dD15}.

In this work we focus our attention on parallel toric arrangements, a specific class of toric arrangement with associated matrix of rank one. With basic techniques from cohomology theory, we prove that the Euler-Poincar\'{e} characteristic of the complement manifold of these toric arrangements can be computed in terms of those of the complement manifolds of the singular subtori that compose the arrangement itself.

    \smallskip
   \noindent \textbf{Overview.} In Section \ref{Section1} we recall some basic definitions and results on toric arrangements, provide a description of parallel toric arrangements and a definition of Euler-Poincar\'{e} characteristic in terms of de Rham cohomology. In Section \ref{Section2} we show our result on the Euler-Poincar\'{e} characteristic of parallel toric arrangements.

\section{Basics}\label{Section1}
The aim of this introductory section is to refresh some basic notions about toric arrangements, in order to set up notations and fix the proper frame for the further part of this paper.
For a comprehensive theory of toric arrangements we suggest reviewing the paper \cite{DP10} as well as the survey \cite{DP11}. On the other hand, for a thorough analysis of the homotopy type of the complement manifold of toric arrangements we recall the articles \cite{dD15} and \cite{Sai24}.

Finally, we point out the seminal work of De Concini and Procesi \cite{DP11} to get a better understanding of the inner and surprising connections between toric arrangements, partition functions and box splines.

\subsection{Toric arrangements}\label{TA}
A \textit{toric arrangement} in $(\mathbb{C}^{\ast})^{n}$ is a finite collection $\mathcal{A}=\{T_{1},\ldots,T_{m}\}$ of subspaces of
$(\mathbb{C}^{\ast})^{n}$, called \textit{subtori}, of the form

\begin{equation*}
 T_{i}=\left\lbrace
        (z_{1},\ldots,z_{n})\in(\mathbb{C}^{\ast})^{n}
        \left|
         z_{1}^{a_{i,1}}\cdots z_{n}^{a_{i,n}}=\alpha_{i}
        \right.
       \right\rbrace
\end{equation*}

\noindent where $a_{i,j}\in\mathbb{Z}$ and $\alpha_{i}\in\mathbb{C}^{\ast}$ as $1\leq i\leq m$ and $1\leq j\leq n$. The \textit{complement manifold} $M(\mathcal{A})$ is the complement of the union of the subtori $H_{i}$ in $(\mathbb{C}^{\ast})^{n}$, that is, the topological space

\begin{equation*}
 M(\mathcal{A})=(\mathbb{C}^{\ast})^{n}\setminus\bigcup_{i=1}^{m}H_{i}
\end{equation*}

\noindent The matrix \textit{associated} to the toric arrangement $\mathcal{A}$ is the integer matrix  of $m$ rows and $n$ columns $\tilde{M}_{\mathcal{A}}\in M_{m,n}(\mathbb{Z})$ defined by setting

\begin{equation*}
 \tilde{M}_{\mathcal{A}}=\left(
                  \begin{array}{ccc}
                   a_{1,1} & \cdots & a_{1,n} \\
                   \vdots  & \vdots & \vdots  \\
                   a_{m,1} & \cdots & a_{m,n} \\
                  \end{array}
                 \right)
\end{equation*}

\noindent The \textit{rank} $\operatorname{rk}(\mathcal{A})$ of the toric arrangement $\mathcal{A}$ is then the rank of the associated matrix $\tilde{M}_{\mathcal{A}}$. We say that $\mathcal{A}$ is \textit{parallel} if $T_{i}\cap T_{j}=\emptyset$ for any pair of indices $1\leq i,j\leq m$ with $i\neq j$. The following Proposition \ref{EQUIVALENCE} will provide a better characterization of parallel toric arrangements.

\begin{proposition}\label{EQUIVALENCE}
 Let $m,n\geq1$ and let $\mathcal{A}=\{T_{1},\ldots,T_{m}\}$ be a toric arrangement in $(\mathbb{C}^{\ast})^{n}$. The following conditions are equivalent:
 \begin{enumerate}[label=(C\arabic{*})]
  \item \label{P1} $\mathcal{A}$ is parallel;
  \item \label{P2} $\mathcal{A}$ has rank $1$ and $T_{i}\not\subseteq T_{j}$ for any pair of indeces $1\leq i,j\leq m$ with $i\neq j$.
 \end{enumerate}
\end{proposition}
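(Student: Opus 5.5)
The plan is to reduce everything to a normal form by an integral change of coordinates on the torus. Throughout I assume the rows $a_i=(a_{i,1},\ldots,a_{i,n})$ are nonzero, so each $T_i$ is a proper nonempty hypersurface. The tool I use is the following standard fact. For $V\in GL_n(\mathbb{Z})$, the monomial map $z\mapsto z^{V}$ is an automorphism (indeed a diffeomorphism) of $(\mathbb{C}^{\ast})^{n}$, and it transforms the character $z^{a}$ into $w^{aV^{-1}}$. Combined with the Smith normal form, this lets me put any small integer matrix into diagonal shape without changing intersection or containment relations among the $T_i$.

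\emph{(C1)$\Rightarrow$(C2).} First I show $\operatorname{rk}(\mathcal{A})=1$ by contradiction. Suppose two rows $a_i,a_j$ are linearly independent. Write the $2\times n$ matrix with rows $a_i,a_j$ in Smith form $U D V$, with $U\in GL_2(\mathbb{Z})$, $V\in GL_n(\mathbb{Z})$ and $D$ diagonal with entries $d_1,d_2\neq0$. After the coordinate change given by $V$ and the automorphism of $(\mathbb{C}^{\ast})^2$ given by $U^{-1}$, the system $z^{a_i}=\alpha_i$, $z^{a_j}=\alpha_j$ becomes $w_1^{d_1}=\beta_1$, $w_2^{d_2}=\beta_2$. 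This system is solvable by taking roots, so $T_i\cap T_j\neq\emptyset$, contradicting (C1). Hence all rows are pairwise proportional and, being nonzero, span a rank one lattice. Non-containment is immediate: if $T_i\subseteq T_j$, then $T_i\cap T_j=T_i\neq\emptyset$, again contradicting (C1).

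\emph{(C2)$\Rightarrow$(C1).} Since the rank is $1$, there is a primitive vector $b\in\mathbb{Z}^n$ with $a_i=k_i b$ and $k_i\in\mathbb{Z}\setminus\{0\}$. I complete $b$ to a matrix $V\in GL_n(\mathbb{Z})$ having $b$ as first row, so that $z^{b}=w_1$ in the new coordinates. Then
\[
T_i=S_i\times(\mathbb{C}^{\ast})^{n-1},\qquad S_i=\{w\in\mathbb{C}^{\ast}\mid w^{k_i}=\alpha_i\}.
\]
Consequently $T_i\cap T_j\neq\emptyset$ if and only if $S_i\cap S_j\neq\emptyset$, and $T_i\subseteq T_j$ if and only if $S_i\subseteq S_j$.

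The main obstacle is this last step: I must show that $S_i\cap S_j\neq\emptyset$ forces $S_i\subseteq S_j$ or $S_j\subseteq S_i$. This fails for arbitrary exponents. For instance, $\{z_1^2=1\}$ and $\{z_1^3=1\}$ meet in $z_1=1$, yet neither contains the other. So the implication needs the convention, standard in the cited literature, that the subtori are connected, i.e.\ that each row $a_i$ is primitive. I would state this assumption explicitly. Under it, $k_i=\pm1$, so each $S_i$ is a single point. Then $S_i\cap S_j\neq\emptyset$ gives $S_i=S_j$, hence $T_i=T_j$, which contradicts the non-containment hypothesis in (C2). This yields (C1).
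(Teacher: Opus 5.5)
Your proposal is correct, and for (C2)$\Rightarrow$(C1) it is more accurate than the paper. Your counterexample is valid. For $T_1=\{z_1^{2}=1\}$ and $T_2=\{z_1^{3}=1\}$ the associated matrix has rank $1$. Neither subtorus contains the other, since points with $z_1=-1$ lie in $T_1\setminus T_2$ and points with $z_1=e^{2\pi i/3}$ lie in $T_2\setminus T_1$. Yet $(1,\ldots,1)\in T_1\cap T_2$.

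So the proposition is false as stated. The paper's proof breaks at the sentence ``since $\mathcal{A}$ has rank $1$, there exists an integer $\lambda$ such that $a_{i,k}=\lambda a_{j,k}$ or $a_{j,k}=\lambda a_{i,k}$''. Rank one only gives \emph{rational} proportionality, and for the rows $(2)$ and $(3)$ no such integer exists. The rest of the paper's argument ($\alpha_j=\alpha_i^{\lambda}$, hence $T_i\subseteq T_j$) is correct once such a $\lambda$ exists. Hence it works verbatim whenever, in each pair, one row is an integer multiple of the other. This holds under your primitivity convention, where $\lambda=\pm1$.

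In your notation $a_i=k_ib$, this pairwise divisibility of the $k_i$ is also necessary. If $k_i\nmid k_j$ and $k_j\nmid k_i$, then $\alpha_i=\alpha_j=1$ gives a counterexample. Your standing assumption of nonzero rows is also needed, and it is only tacit in the paper. A zero row can make the rank $0$ or make $T_i$ empty, and then ``clearly $T_i\not\subseteq T_j$'' fails.

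For (C1)$\Rightarrow$(C2) your route differs from the paper's only in technique. The paper lifts $\alpha_i,\alpha_j$ to logarithms $\beta_i,\beta_j$, solves the rank-$2$ complex linear system $a_i\cdot w=\beta_i$, $a_j\cdot w=\beta_j$, and exponentiates. This is shorter and needs no Smith normal form.

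Your monomial change of coordinates is heavier there, but it pays off in the converse. Writing $T_i=S_i\times(\mathbb{C}^{\ast})^{n-1}$, with $S_i$ a set of $k_i$-th roots, makes both the failure of the statement and the correct extra hypothesis transparent. The paper's main theorem uses only pairwise disjointness, not this proposition, so it is unaffected.
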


\begin{proof}
 First, we prove that \ref{P1} $\Longrightarrow$ \ref{P2}. Let us assume that $\mathcal{A}$ is a parallel toric arrangement. Clearly, for any pair of indices $i,j$ with $1\leq i,j\leq m$ such that $i\neq j$ we have $T_{i}\not\subseteq T_{j}$. Thus, it remains to verify that $\mathcal{A}$ has rank $1$. By way of contradiction, let us assume that $\operatorname{rk}(\mathcal{A})\geq2$. Thus, there exist two linear independent rows in the matrix $\tilde{M}_{\mathcal{A}}$, namely the $i\text{-th}$ and the $j\text{-th}$ one. Now, consider the rank $2$ complex linear system

 \begin{equation}\label{SY}
  \left\lbrace
   \begin{aligned}
    & a_{i,1}w_{1}+\ldots+a_{i,n}w_{n} & = & \beta_{i} \\
    & a_{j,1}w_{1}+\ldots+a_{j,n}w_{n} & = & \beta_{j} \\
   \end{aligned}
  \right.
 \end{equation}

 \noindent where $\beta_{i}$ and $\beta_{j}$ are complex numbers such that $\alpha_{i}=e^{\beta_{i}}$ as well as $\alpha_{j}=e^{\beta_{j}}$. Notice that the numbers $\beta_{i}$ and $\beta_{j}$ always exist since the exponential $w\mapsto e^{w}$ is a covering map of $\mathbb{C}^{\ast}$.

 \noindent The rank $2$ condition ensures the existence of a point $Q=(\tilde{w}_{1},\ldots,\tilde{w}_{n})$ of $\mathbb{C}^{n}$ that is solution to the linear system \eqref{SY}. Then, taking the point $P=(\tilde{z}_{1},\ldots\tilde{z}_{n})$ of $(\mathbb{C}^{\ast})^{n}$ defined componentwise by $\tilde{z}_{k}=e^{\tilde{w}_{k}}$, it is not hard to see that $P\in T_{i}\cap T_{j}$. Indeed, we have

  \begin{equation*}
   \begin{aligned}
     \tilde{z}_{1}^{a_{i,1}}\cdots\tilde{z}_{n}^{a_{i,n}} & =
     (e^{\tilde{w}_{1}})^{a_{i,1}}\cdots(e^{\tilde{w}_{n}})^{a_{i,n}} =
       e^{a_{i,1}\tilde{w}_{1}}\cdots e^{a_{i,n}\tilde{w}_{n}}           = \\
     &   =    e^{(a_{i,1}\tilde{w}_{1}+\ldots+a_{i,n}\tilde{w}_{n})}
        =    e^{\beta_{i}}  = \alpha_{i} \\
   \end{aligned}
  \end{equation*}

  \noindent so that $P\in T_{i}$. With the same computation, $P\in T_{j}$, too. Hence, we have deduced that $T_{i}\cap T_{j}\neq\emptyset$ which is a contradiction.

 \noindent Now, it remains to show that \ref{P2} $\Longrightarrow$ \ref{P1}. Again, by way of contradiction let us assume that there exists a pair of indices $i,j$ with $1\leq i,j\leq m$ and $i\neq j$ for which we have $T_{i}\cap T_{j}\neq\emptyset$. Since $\mathcal{A}$ has rank $1$, there exists an integer $\lambda$ such that, for all $k=1,\ldots,n$, either

 \begin{equation}\label{CA}
  a_{i,k}=\lambda a_{j,k}
 \end{equation}

 \noindent or

 \begin{equation}\label{CB}
  a_{j,k}=\lambda a_{i,k}
 \end{equation}

 \noindent is verified. Up to a transposition of the labels $i$ and $j$ we can assume that condition \eqref{CB} holds.
 Under the assumption $T_{i}\cap T_{j}\neq\emptyset$, let us consider a point $P=(\tilde{z}_{1},\ldots,\tilde{z}_{n})$ in $(\mathbb{C}^{\ast})^{n}$ that belongs to $T_{i}\cap T_{j}$. From the definition of the subtori $T_{i}$ and $T_{j}$ we find that

  \begin{equation}\label{CC}
   \tilde{z}_{1}^{a_{i,1}}\cdots \tilde{z}_{n}^{a_{i,n}}=\alpha_{i}
  \end{equation}

 \noindent and

  \begin{equation}\label{CD}
   \tilde{z}_{1}^{a_{j,1}}\cdots \tilde{z}_{n}^{a_{j,n}}=\alpha_{j}
  \end{equation}

  \noindent are fulfilled. From \eqref{CB} we can rewrite \eqref{CD} in the form

  \begin{equation*}
   \tilde{z}_{1}^{\lambda a_{i,1}}\cdots \tilde{z}_{n}^{\lambda a_{i,n}}=\alpha_{j}
  \end{equation*}

  \noindent which is equivalent to

  \begin{equation*}
   (\tilde{z}_{1}^{a_{i,1}}\cdots \tilde{z}_{n}^{a_{i,n}})^{\lambda}=\alpha_{j}
  \end{equation*}

  \noindent and then from \eqref{CC} we finally deduce that

  \begin{equation*}
   \alpha_{j}=\alpha_{i}^{\lambda}
  \end{equation*}

  \noindent Thus, $T_{j}$ can be written as

  \begin{equation}\label{CONT}
  T_{j}=\left\lbrace
         (z_{1},\ldots,z_{n})\in(\mathbb{C}^{\ast})^{n}
         \left|
          (z_{1}^{a_{i,1}}\cdots z_{n}^{a_{i,n}})^{\lambda}=(\alpha_{i})^{\lambda}
         \right.
        \right\rbrace
 \end{equation}

 \noindent indeed, we have

 \begin{equation*}
  \begin{aligned}
   T_{j} & = \left\lbrace
         (z_{1},\ldots,z_{n})\in(\mathbb{C}^{\ast})^{n}
         \left|
          (z_{1}^{a_{j,1}}\cdots z_{n}^{a_{j,n}})=\alpha_{j}
         \right.
        \right\rbrace \\
       & = \left\lbrace
         (z_{1},\ldots,z_{n})\in(\mathbb{C}^{\ast})^{n}
         \left|
          (z_{1}^{\lambda a_{i,1}}\cdots z_{n}^{\lambda a_{i,n}})=(\alpha_{i})^{\lambda}
         \right.
        \right\rbrace \\
        & = \left\lbrace
         (z_{1},\ldots,z_{n})\in(\mathbb{C}^{\ast})^{n}
         \left|
          (z_{1}^{a_{i,1}}\cdots z_{n}^{a_{i,n}})^{\lambda}=(\alpha_{i})^{\lambda}
         \right.
        \right\rbrace \\
  \end{aligned}
 \end{equation*}

 \noindent Now, let us consider a point $S=(s_{1},\ldots s_{n})\in(\mathbb{C}^{\ast})^{n}$ such that $S\in T_{i}$. Hence,

 \begin{equation*}
  s_{1}^{a_{i,1}}\cdots s_{n}^{a_{i,n}}=\alpha_{i}
 \end{equation*}

 \noindent Raising both sides of the previous equation to the power of $\lambda$, we get

 \begin{equation*}
  (s_{1}^{a_{i,1}}\cdots s_{n}^{a_{i,n}})^{\lambda}=(\alpha_{i})^{\lambda}
 \end{equation*}

 \noindent Together with \eqref{CONT}, this implies that $S\in T_{j}$, too. Thus, $T_{i}\subseteq T_{j}$ leading to a contradiction.

\end{proof}

\subsection{Euler-Poincar\'{e} characteristic}\label{EPC}
For the sake of simplicity, since we are working with smooth manifolds, throughout this work we are dealing with de Rham cohomology. Thanks to de Rham theorem (compare \cite[Chapters 17 and 18]{Lee13}), in the context of differentiable manifolds this cohomology theory is equivalent to the singular one. Therefore it is possible to provide topological definitions in terms of de Rham cohomology. Then, the \textit{Euler-Poincar\'{e} characteristic} $\chi_{M}$ of a differentiable manifold $M$ of real dimension $d$ can be defined as the alternating sum

\begin{equation*}
 \chi_{M}=\sum_{k=0}^{d}(-1)^{k}\operatorname{dim}H^{k}(M)
\end{equation*}

\noindent where $H^{k}(M)$ is the $k\text{-th}$ de Rham cohomology group of $M$. We remark that from the definition of de Rham cohomology it follows that $H^{k}(M)=0$ as $k\leq-1$ and $k\geq d+1$. Moreover, we recall that a complex manifold of complex dimension $n$ is also a differentiable manifold of real dimension $2n$.

\section{Results}\label{Section2}
The aim of this section is to prove our main result. More precisely, we are going to prove that the Euler-Poincar\'{e} characteristic of the complement manifold of a parallel toric arrangement can be solely computed from those of the complement manifolds of the singular subtori that compose the arrangement.

\begin{theorem}\label{main}
 Let $m,n\geq1$ and let $\mathcal{A}=\{T_{1},\ldots,T_{m}\}$ be a toric arrangement in $(\mathbb{C}^{\ast})^{n}$ with complement manifold $M(\mathcal{A})$. Let $M(\mathcal{A}_{i})$ be the complement of $T_{i}$ in $(\mathbb{C}^{\ast})^{n}$ as $1\leq i\leq m$. If $\mathcal{A}$ is parallel then the Euler-Poincar\'{e} characteristic of $M(\mathcal{A})$ can be computed in terms of those of $M(\mathcal{A}_{i})$, that is $\chi_{M(\mathcal{A})}=\sum_{i=1}^{m}\chi_{M(\mathcal{A}_{i})}$.
\end{theorem}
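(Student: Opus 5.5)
The plan is to use additivity of the Euler--Poincar\'{e} characteristic through the Mayer--Vietoris sequence in de Rham cohomology. The point is that, because the subtori are pairwise disjoint, the unknown correction terms cancel when we compare $M(\mathcal{A})$ with the single complements $M(\mathcal{A}_{i})$. Write $X=(\mathbb{C}^{\ast})^{n}$. First I will record that $\chi_{X}=0$: by the K\"unneth formula $H^{\ast}(X)\cong H^{\ast}(S^{1})^{\otimes n}$, and $\chi_{S^{1}}=0$.

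\textbf{Step 1 (neighbourhoods).} By Proposition \ref{EQUIVALENCE}, each $T_{i}$ is a closed complex submanifold of $X$, since it is the fibre of a character of rank one. Parallelism means the $T_{i}$ are pairwise disjoint closed sets. I choose open tubular neighbourhoods $N_{i}\supset T_{i}$ that deformation retract onto $T_{i}$ and are pairwise disjoint. This is the step I expect to need the most care, because the $T_{i}$ are non-compact. I would first separate the disjoint closed sets $T_{i}$ by pairwise disjoint open sets $W_{i}$, using normality of the metric space $X$. I would then take a tubular neighbourhood of $T_{i}$ inside $W_{i}$, with a radius given by a positive continuous function on $T_{i}$.

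Concretely, after an integral change of coordinates one may assume that every $T_{i}$ is cut out by an equation $z_{1}^{d_{i}}=\alpha_{i}$. Then $T_{i}$ is a finite disjoint union of translates of $\{z_{1}=c\}$, and one can take $N_{i}$ to be a union of small discs around the points $c$ in the $z_{1}$-coordinate. Either way, $N_{i}\setminus T_{i}$ retracts onto a circle bundle over $T_{i}$. Hence all the spaces involved have finite-dimensional cohomology, which is needed for $\chi$ to be defined.

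\textbf{Step 2 (Mayer--Vietoris).} For an open cover $X=U\cup V$ in which all four spaces have finite-dimensional cohomology, exactness of the long Mayer--Vietoris sequence gives
\begin{equation*}
\chi_{X}=\chi_{U}+\chi_{V}-\chi_{U\cap V}.
\end{equation*}

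I apply this to $U=M(\mathcal{A})$ and $V=\bigsqcup_{i}N_{i}$. Here $U\cap V=\bigsqcup_{i}(N_{i}\setminus T_{i})$, and both $\chi_{V}$ and $\chi_{U\cap V}$ are additive over the disjoint union, with $\chi_{N_{i}}=\chi_{T_{i}}$. This yields
\begin{equation*}
\chi_{X}=\chi_{M(\mathcal{A})}+\sum_{i=1}^{m}\bigl(\chi_{T_{i}}-\chi_{N_{i}\setminus T_{i}}\bigr).
\end{equation*}

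I also apply it, for each $i$, to $U=M(\mathcal{A}_{i})$ and $V=N_{i}$, which gives
\begin{equation*}
\chi_{X}=\chi_{M(\mathcal{A}_{i})}+\chi_{T_{i}}-\chi_{N_{i}\setminus T_{i}}.
\end{equation*}

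\textbf{Step 3 (conclusion).} Summing the second family of identities over $i$ and subtracting the first identity gives
\begin{equation*}
\chi_{M(\mathcal{A})}=\sum_{i=1}^{m}\chi_{M(\mathcal{A}_{i})}-(m-1)\chi_{X}.
\end{equation*}
Since $\chi_{X}=0$, this is exactly the statement of Theorem \ref{main}. Note that the argument never needs the value of $\chi_{T_{i}}$ itself, which is $0$ for $n\geq2$ and the number of points for $n=1$.
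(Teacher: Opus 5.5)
Your proof is correct, but it takes a more roundabout route than the paper's. The paper uses no neighbourhoods of the $T_{i}$. It inducts on $m$ and applies Mayer--Vietoris to the cover of $X=(\mathbb{C}^{\ast})^{n}$ by $U=M(\mathcal{A}_{m+1})$ and $V=M(\tilde{\mathcal{A}})$, where $\tilde{\mathcal{A}}=\{T_{1},\ldots,T_{m}\}$. By definition $U\cap V=M(\mathcal{A})$, and $U\cup V=X\setminus\bigl(T_{m+1}\cap\bigcup_{j\leq m}T_{j}\bigr)=X$ precisely because of parallelism. With $\chi_{X}=0$ this gives $\chi_{M(\mathcal{A})}=\chi_{M(\mathcal{A}_{m+1})}+\chi_{M(\tilde{\mathcal{A}})}$, and induction concludes.

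So disjointness already makes the complements themselves an open cover: $X\setminus T_{m+1}$ is an open neighbourhood of $T_{1}\cup\cdots\cup T_{m}$ and plays the role of your $\bigsqcup_{i}N_{i}$. Your local terms $\chi_{N_{i}}-\chi_{N_{i}\setminus T_{i}}$, introduced only to cancel, are therefore unnecessary. The paper's route needs no submanifold structure, no tubular neighbourhoods of non-compact $T_{i}$ and no change of coordinates. It works for any finite family of pairwise disjoint closed subsets of a manifold, provided the cohomology is finite-dimensional, and without using $\chi_{X}=0$ it yields the same $-(m-1)\chi_{X}$ correction you obtain.

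What your route buys is an explicit justification that all the cohomology groups involved are finite-dimensional, which the paper uses tacitly. Your coordinate change also already gives $M(\mathcal{A})\cong(\mathbb{C}^{\ast}\setminus F)\times(\mathbb{C}^{\ast})^{n-1}$, with $F=\bigsqcup_{i}F_{i}$ and $F_{i}\subset\mathbb{C}^{\ast}$ the finite set of roots of $z_{1}^{d_{i}}=\alpha_{i}$. From this the theorem follows by direct computation: both sides vanish for $n\geq2$, and both equal $-|F|$ for $n=1$.
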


\begin{proof}
 We prove our statement by induction on the number $m$ of subtori that compose the arrangement $\mathcal{A}$. The base case $m=1$ is trivial. For the inductive step, let us assume our statement is true for every parallel toric arrangement that consists of $m$ subtori and let us consider a parallel toric arrangement $\mathcal{A}=\{T_{1},\ldots,T_{m+1}\}$ in 
$(\mathbb{C}^{\ast})^{n}$ that is made of $m+1$ subtori. Set $U=M(\mathcal{A}_{m+1})$ and $V=M(\tilde{\mathcal{A}})$ where $\mathcal{A}_{m+1}=\{T_{m+1}\}$ and $\tilde{\mathcal{A}}=\{T_{1},\ldots,T_{m}\}$. From the definition of complement manifold of an arrangement we have
\begin{equation*}
 U\cap V=M(\mathcal{A})
\end{equation*}

\noindent Moreover, since the toric arrangement $\mathcal{A}$ is parallel we also can deduce that
\begin{equation*}
 U\cup V=(\mathbb{C}^{\ast})^{n}
\end{equation*}

\noindent Indeed, we have
\begin{equation*}
\begin{aligned}
 U\cup V & = M(\mathcal{A}_{m+1})\cup M(\tilde{\mathcal{A}})= \\
         & = \left[(\mathbb{C}^{\ast})^{n}\setminus T_{m+1}\right]\cup\left[(\mathbb{C}^{\ast})^{n}\setminus\bigcup_{j=1}^{m}T_{j}\right]= \\
         & = (\mathbb{C}^{\ast})^{n}\setminus\left(T_{m+1}\cap\bigcup_{j=1}^{m}T_{j}\right)= \\
         & = (\mathbb{C}^{\ast})^{n}\setminus\bigcup_{j=1}^{m}\left(T_{m+1}\cap T_{j}\right)= \\
         & = (\mathbb{C}^{\ast})^{n} \\
\end{aligned}
\end{equation*}

\noindent Now, let us consider the Mayer-Vietoris sequence for the de Rham cohomology associated to the open cover $\{U,V\}$ of $(\mathbb{C}^{\ast})^{n}$, that is,
\begin{equation*}
 \cdots\rightarrow H^{k}((\mathbb{C}^{\ast})^{n})\rightarrow H^{k}(M(\mathcal{A}_{m+1}))\oplus H^{k}(M(\tilde{\mathcal{A}}))\rightarrow H^{k}(M(\mathcal{A}))\rightarrow\cdots
\end{equation*}

\noindent From the fact that this sequence is exact and $H^{k}(M)=0$ as $k\leq-1$ and $k\geq d+1$ we have
\begin{equation*}
\chi_{(\mathbb{C}^{\ast})^{n}}+\chi_{M(\mathcal{A})}=\chi_{M(\mathcal{A}_{m+1})}+\chi_{M(\tilde{\mathcal{A}})}
\end{equation*} 

\noindent The Euler-Poincar\'{e} characteristic is a homotopic invariant, thus
\begin{equation*}
 \chi_{(\mathbb{C}^{\ast})^{n}}=\chi_{(S^{1})^{n}}
\end{equation*}

\noindent Exploiting the K\"{u}nneth formula
\begin{equation*}
 H^{k}((S^{1})^{n})=\mathbb{R}^{\binom{n}{k}}
\end{equation*}

\noindent so that
\begin{equation*}
 \chi_{(S^{1})^{n}}=\sum_{k=0}^{n}(-1)^{k}\binom{n}{k}=0
\end{equation*}

\noindent Hence,
\begin{equation*}
 \chi_{M(\mathcal{A})}=\chi_{M(\mathcal{A}_{m+1})}+\chi_{M(\tilde{\mathcal{A}})}
\end{equation*}  

\noindent By inductive step
\begin{equation*}
 \chi_{M(\tilde{\mathcal{A}})}=\sum_{j=1}^{m}\chi_{M(\mathcal{A}_{j})}
\end{equation*} 

\noindent and then our statement is finally proved since we have
\begin{equation*}
\chi_{M(\mathcal{A})}=\chi_{M(\mathcal{A}_{m+1})}+\sum_{j=1}^{m}\chi_{M(\mathcal{A}_{j})}=\sum_{j=1}^{m+1}\chi_{M(\mathcal{A}_{j})}
\end{equation*}  

\end{proof}

%

%

\smallskip
\noindent \textbf{Acknowledgments.} The author would like to thank Matthias Lenz for the very helpful support provided throughout all
the time this project has been developed.

\bibliographystyle{amsalpha}

\bibliography{Full_revised}

\end{document}